\theoremstyle{plain}
\newtheorem{thm}{Theorem}[section]
\newtheorem*{thm*}{Theorem}
\newtheorem{prop}{Proposition}[section]
\newtheorem*{prop*}{Proposition}
\newtheorem*{cor*}{Corollary}
\newtheorem{lem}{Lemma}[section]
\newtheorem*{lem*}{Lemma}
\theoremstyle{definition}
\newtheorem{defn}{Definition}[section]
\newtheorem*{defn*}{Definition}
\newtheorem{exmps}{Examples}[section]
\newtheorem*{exmps*}{Examples}
\newtheorem*{exmp*}{Example}
\newtheorem*{exerc*}{Exercise}
\newtheorem{rems}{Remarks}[section]
\newtheorem*{rems*}{Remarks}
\newtheorem*{rem*}{Remark}
\newcommand{\N}{{\mathbb N}}%%%%%%%%%NATURAL NUMBERS
\newcommand{\Z}{{\mathbb Z}}%%%%%%%%%INTEGERS
\newcommand{\R}{{\mathbb R}}%%%%%%%%%REAL NUMBERS
\newcommand{\C}{{\mathbb C}}%%%%%%%%%COMPLEX NUMBERS
\newcommand{\F}{{\mathbb F}}
\newcommand{\emps}{\emptyset}
\renewcommand{\iff}{\: \Leftrightarrow\: }
\renewcommand{\bar}{\overline}
\DeclareMathOperator{\Rep}{Re\ignorespaces}
\DeclareMathOperator{\orb}{orb}
\begin{document}
%%%%%%%%%%%%%%%%%%%%%%TITLE%%%%%%%%%%%%%%%%%%%%%%%%%%%%%%%%%
\title[The non-hypercyclicity
of normal operators and their exponentials]
{On the non-hypercyclicity
of normal operators, their exponentials, and symmetric operators}
%%%%%%%%%%%%%%%%%%%%%%AUTHORS%%%%%%%%%%%%%%%%%%%%%%%%%%%%%%% first author
\author[Marat V. Markin]{Marat V. Markin}
%%%%%%%%%%%%%%%%%%%%%ADDRESS%%%%%%%%%%%%%%%%%%%%%%%%%%%%%%%%
\address{
Department of Mathematics\newline
%College of Science and Mathematics\newline
California State University, Fresno\newline
5245 N. Backer Avenue, M/S PB 108\newline
Fresno, CA 93740-8001
}
%%%%%%%%%%%%%%%%%%%%%E-MAIL%%%%%%%%%%%%%%%%%%%%%%%%%%%%%%%%%
\email{mmarkin@csufresno.edu}
%%%%%%%%%%%%%%%%%%%%%%%%%%%%%%%%%%%%%%%%%%%%%%%%%%%%%%
% second author
\author{Edward S. Sichel}
%\address{
%Department of Mathematics\newline
%College of Science and Mathematics\newline
%California State University, Fresno\newline
%5245 N. Backer Avenue, M/S PB 108\newline
%Fresno, CA 93740-8001
%}
\email[Corresponding author]{edsichel@mail.fresnostate.edu}
%%%%%%%%%%%%%%%%%%%%%DEDICATORY%%%%%%%%%%%%%%%%%%%%%%%%%%%%%
%\dedicatory{}
%%%%%%%%%%%%%%%%%%%%DATE%%%%%%%%%%%%%%%%%%%%%%%%%%%%%%%%%%%%
%\date{}
%%%%%%%%%%%%%%%%%%%%%ACKNOWLEDGEMENTS%%%%%%%%%%%%%%%%%%%%%%%
%\thanks{}
%%%%%%%%%%%%%%%%%%%SUBJECT CLASSIFICATION%%%%%%%%%%%%%%%%%%
\subjclass[2010]{Primary 47A16, 47B15; Secondary 47D06, 47D60, 34G10}
%%%%%%%%%%%%%%%%%%%%KEYWORDS%%%%%%%%%%%%%%%%%%%%%%%%%%%%%%%%
\keywords{Hypercyclicity, scalar type spectral operator, normal operator, $C_0$-se\-migroup}
%%%%%%%%%%%%%%%%%%%%ABSTRACT%%%%%%%%%%%%%%%%%%%%%%%%%%%%%%%%
\begin{abstract}
We give a simple, straightforward proof of the non-hypercyclicity of an arbitrary (bounded or not) \textit{normal operator} $A$ in a complex Hilbert space as well as of the collection $\left\{e^{tA}\right\}_{t\ge 0}$ of its exponentials, which, under a certain condition on the spectrum of $A$, coincides with the $C_0$-semigroup generated by it. We also establish non-hypercyclicity for \textit{symmetric operators}.
\end{abstract}
%%%%%%%%%%%%%%%%%%%%%%%%%%%%%%%%%%%%%%%%%%%%%%%%%%%%%%%%%%%%%
\maketitle
%%%%%%%%%%%%%%%%%%%%%%%%%EPIGRATH%%%%%%%%%%%%%%%%%%%%%%%%%%%%\epigraph{\textit{Everything should be made as simple as possible, but not simpler.}}{Albert Einstein}

%%%%%%%%%%%%%%%%%%%%%%%%%%%%%%%%%%%%%%%%%%%%%%%%%%%%%%%
\section[Introduction]{Introduction}

In \cite{Markin2018(7)}, furnished is a straightforward proof of the non-hypercyclicity of an arbitrary (bounded or not) \textit{scalar type spectral operator} $A$ in a complex Banach space as well as of the collection $\left\{e^{tA}\right\}_{t\ge 0}$ of its exponentials (see, e.g., \cite{Dun-SchIII}), the important particular case of a \textit{normal operator} $A$ in a complex Hilbert space (see, e.g., \cite{Dun-SchII,Plesner}) following immediately.

Without the need to resort to the machinery of dual space, we provide a shorter, simpler, and more transparent direct proof for the normal operator case, in particular, generalizing the known result \cite[Corollary $5.31$]{Grosse-Erdmann-Manguillot} for 
bounded normal operators, and further establish non-hypercyclicity for \textit{symmetric operators} (see, e.g., \cite{Akh-Glaz}).

\begin{defn}[Hypercyclicity]\ \\
Let
\[
A:X\supseteq D(A)\to X
\]
($D(\cdot)$ is the \textit{domain} of an operator) be a (bounded or unbounded) linear operator in a (real or complex) Banach space $(X,\|\cdot\|)$. A vector 
\begin{equation*}
f\in C^\infty(A):=\bigcap_{n=0}^{\infty}D(A^n)
\end{equation*}
($A^0:=I$, $I$ is the \textit{identity operator} on $X$) 
is called \textit{hypercyclic} if its orbit 
\[
\orb(f,A):=\left\{A^nf\right\}_{n\in\Z_+}
\] 
under $A$ ($\Z_+:=\left\{0,1,2,\dots\right\}$ is the set of nonnegative integers) is dense in $X$.

Linear operators possessing hypercyclic vectors are
said to be \textit{hypercyclic}.

More generally, a collection $\left\{T(t)\right\}_{t\in J}$ ($J$ is a nonempty indexing set) of linear operators in $X$ is called \textit{hypercyclic} if it possesses 
\textit{hypercyclic vectors}, i.e., such vectors 
$\displaystyle f\in \bigcap_{t\in J}D(T(t))$, whose \textit{orbit} 
\[
\left\{T(t)f\right\}_{t\in J}
\]
is dense in $X$.

Cf. \cite{Grosse-Erdmann-Manguillot,Guirao-Montesinos-Zizler,Rolewicz1969,B-Ch-S2001,deL-E-G-E2003,Markin2018(9),Markin2018(10)}.
\end{defn}

\begin{rems}\label{remshlo}\
\begin{itemize}
\item Clearly, hypercyclicity for a linear operator can only be discussed in a \textit{separable} Banach space setting. Generally, for a collection of operators, this need not be the case.
%%%%%%%%
\item For a hypercyclic linear operator $A$, dense in $(X,\|\cdot\|)$ is the subspace $C^\infty(A)$ (cf., e.g., \cite{Markin2018(7)}), which, in particular, implies that any hypercyclic linear operator
is \textit{densely defined} (i.e., $\bar{D(A)}=X$).
%%%%%%%%
\item Bounded normal operators on a complex Hilbert space are known to be non-hypercyclic \cite[Corollary $5.31$]{Grosse-Erdmann-Manguillot}. 
\end{itemize} 
\end{rems} 

%%%%%%%%%%%%%%%%%%%%%%%%%%%%%%%%%%%%%%%%%%%%%%%
\section[Preliminaries]{Preliminaries}

Here, we briefly outline certain preliminaries essential for the subsequent discourse (for more, see, e.g., \cite{Markin1999,Markin2001(1),Markin2001(2)}).

Henceforth, unless specified otherwise, $A$ is a {\it normal operator} in a complex Hilbert space $(X,(\cdot,\cdot),\|\cdot\|)$ with strongly $\sigma$-additive \textit{spectral measure} (the \textit{resolution of the identity}) $E_A(\cdot)$ assigning to Borel sets of the complex plane $\C$ orthogonal projection operators on $X$ and having the operator's \textit{spectrum} $\sigma(A)$ as its {\it support} \cite{Dun-SchII,Plesner}.

Associated with a normal operator $A$ is the {\it Borel operational calculus} assigning to any Borel measurable function $F:\sigma(A)\to \C$ a normal operator
\begin{equation*}
F(A):=\int\limits_{\sigma(A)} F(\lambda)\,dE_A(\lambda)
\end{equation*}
with
\[
f\in D(F(A)) \iff \int\limits_{\sigma(A)} |F(\lambda)|^2\,d(E_A(\lambda)f,f)<\infty,
\]
where $(E_A(\cdot)f,f)$ is a Borel measure, in which case
\begin{equation}\label{n2}
\|F(A)f\|^2=\int\limits_{\sigma(A)} |F(\lambda)|^2\,d(E_A(\lambda)f,f)
\end{equation}
\cite{Dun-SchII,Plesner}.

In particular,
\begin{equation*}
A^n=\int\limits_{\sigma(A)} \lambda^n\,dE_A(\lambda),\ n\in\Z_+,\quad\text{and}\quad
e^{tA}:=\int\limits_{\sigma(A)} e^{t\lambda}\,dE_A(\lambda),\ t\in \R.
\end{equation*}

Provided
\[
\sigma(A)\subseteq \left\{\lambda\in\C\,\middle|\, \Rep\lambda\le \omega\right\}
\] 
with some $\omega\in \R$, the collection 
of exponentials $\left\{e^{tA}\right\}_{t\ge 0}$
is the $C_0$-\textit{semigroup} generated by $A$ \cite{Engel-Nagel,Plesner}.

%\begin{samepage}
\begin{rems}\label{remsws}\
\begin{itemize}
\item By \cite[Theorem $3.1$]{Markin1999},
the orbits
\begin{equation}\label{expf1}
y(t)=e^{tA}f,\ t\ge 0,f \in \bigcap_{t\ge 0}D(e^{tA}),
\end{equation}
describe all \textit{weak/mild solutions} of the abstract evolution equation
\begin{equation}\label{+}
y'(t)=Ay(t),\ t\ge 0,
\end{equation}
(see \cite{Ball}, cf. also {\cite[Ch. II, Definition 6.3]{Engel-Nagel}}).
%%%%%%%%
\item The subspaces
\[
C^\infty(A)\quad \text{and}\quad \bigcap_{t\ge 0}D(e^{tA}) 
\]
of all possible initial values for the corresponding orbits are \textit{dense} in $X$ since they contain the subspace
\begin{equation*}
\bigcup_{\alpha>0}E_A(\Delta_\alpha)X,\ \text{where}\ \Delta_\alpha:=\left\{\lambda\in\C\,\middle|\,|\lambda|\le \alpha \right\},\ \alpha>0,
\end{equation*}
which is dense in $X$ and coincides with the class ${\mathscr E}^{\{0\}}(A)$ of the \textit{entire vectors of $A$ of exponential type} (see, e.g., \cite{Gor-Knyaz,Radyno1983(1)}, cf. also \cite{Markin2015}).
\end{itemize} 
\end{rems} 
%\end{samepage}

%%%%%%%%%%%%%%%%%%%%%%%%%%%%%%%%%%%%%%%%%%%%%%%%%%%%%%%
\section[Normal Operators and Their Exponentials]{Normal Operators and Their Exponentials}

We are to prove 
\cite[Corollary $4.1$]{Markin2018(7)} directly
generalizing in part \cite[Corollary $5.31$]{Grosse-Erdmann-Manguillot}.

%%%%%%%%%%%%%%%%%%%%%%%%%%%%%%%%%%%%%%%%%%%%%%%%%%%%%
\begin{thm}[{\cite[Corollary $4.1$]{Markin2018(7)}}]\label{Thm1}\ \\
An arbitrary normal, in particular self-adjoint, operator $A$ in a nonzero complex Hilbert space $(X,(\cdot,\cdot),\|\cdot\|)$ with spectral measure $E_A(\cdot)$ is not hypercyclic and neither is the collection $\left\{e^{tA}\right\}_{t\ge 0}$ of its exponentials, which, provided the spectrum of $A$ is located in a left half-plane
\[
\left\{\lambda\in \C\,\middle|\,\Rep\lambda\le \omega \right\}
\]
with some $\omega\in\R$, is the $C_0$-semigroup generated by $A$. 
\end{thm}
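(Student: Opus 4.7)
The plan is to argue by contradiction, splitting the spectrum into a ``contractive'' piece and an ``expansive'' piece via a single spectral projection, and showing that density of a hypercyclic orbit cannot be reconciled with either. The only analytic tool is the norm identity \eqref{n2}.

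For the operator $A$ itself I would set $P:=E_A(\{\lambda\in\C\,:\,|\lambda|\leq 1\})$. Since $P$ is a bounded Borel function of $A$, the multiplicativity of the operational calculus ensures that $P$ maps $C^\infty(A)$ into itself and commutes with every $A^n$ on that domain, and \eqref{n2} yields
\begin{equation*}
\|PA^nf\|^2=\|A^nPf\|^2=\int_{|\lambda|\leq 1}|\lambda|^{2n}\,d(E_A(\lambda)f,f)\leq (E_A(\{|\lambda|\leq 1\})f,f)\leq\|f\|^2
\end{equation*}
for every $n\in\Z_+$. If $f$ were hypercyclic, continuity of $P$ would force $\{PA^nf\}_{n\in\Z_+}$ to be dense in the closed subspace $PX$; but this sequence is bounded by $\|f\|$, so $PX$ is itself a bounded linear subspace of $X$ and must collapse to $\{0\}$, whence $E_A(\{|\lambda|\leq 1\})=0$. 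Consequently $(E_A(\cdot)f,f)$ is carried by $\{|\lambda|>1\}$, and \eqref{n2} now gives $\|A^nf\|^2\geq\|f\|^2$ for all $n$, contradicting the fact that density of the orbit in the nonzero space $X$ requires a subsequence with $A^{n_k}f\to 0$.

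The exponential family $\{e^{tA}\}_{t\geq 0}$ yields to exactly the same scheme with $Q:=E_A(\{\lambda\in\C\,:\,\Rep\lambda\leq 0\})$ in place of $P$ and the pointwise bound $|e^{t\lambda}|^2=e^{2t\Rep\lambda}\leq 1$ for $\Rep\lambda\leq 0$, $t\geq 0$, replacing $|\lambda|^{2n}\leq 1$ on $\{|\lambda|\leq 1\}$. This first forces $E_A(\{\Rep\lambda\leq 0\})=0$, and then $\|e^{tA}f\|\geq\|f\|$ uniformly in $t\geq 0$, which is again incompatible with density.

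The only point requiring care is the domain bookkeeping: one must verify that $Pf\in C^\infty(A)$ and $Qf\in\bigcap_{t\geq 0}D(e^{tA})$, and that $P$, $Q$ commute with $A^n$ and $e^{tA}$ on those domains. Both are immediate from the fact that $P$ and $Q$ are themselves bounded Borel functions of $A$, so no genuine obstacle arises; the whole argument reduces to the observation that a spectral projection onto a ``contractive'' Borel subset of $\sigma(A)$ forces the image of the orbit to stay bounded, and a bounded linear subspace is necessarily $\{0\}$.
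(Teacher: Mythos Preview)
Your argument is correct and uses the same spectral split as the paper (at $|\lambda|=1$ for powers, at $\Rep\lambda=0$ for exponentials) together with the same pointwise bounds via \eqref{n2}. The organization differs slightly: the paper proceeds directly, fixing an arbitrary nonzero $f\in C^\infty(A)$ and doing a two-case split on whether $E_A(\{|\lambda|>1\})f$ vanishes---in one case $\|A^nf\|$ stays bounded below by a positive constant (so the orbit cannot approach $0$), in the other it stays bounded above by $\|f\|$ (so the orbit is bounded)---hence no vector has dense orbit. You instead argue by contradiction and exploit that the bounded projection $P$ carries a dense orbit onto a dense subset of the closed subspace $PX$; boundedness of that image then forces $PX=\{0\}$ globally, after which the lower bound $\|A^nf\|\ge\|f\|$ finishes. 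Your route extracts the stronger intermediate conclusion $E_A(\{|\lambda|\le1\})=0$ (not merely $Pf=0$), though only the latter is actually needed; apart from this packaging difference the two proofs are essentially the same.
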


\begin{proof}
Let $f\in C^\infty(A)\setminus \{0\}$ be arbitrary.

There are two possibilities: either
\[
E_A\left(\left\{\lambda\in \sigma(A)\,\middle|\, |\lambda|>1\right\}\right)f\neq 0
\]
or
\[
E_A\left(\left\{\lambda\in \sigma(A)\,\middle|\, |\lambda|>1\right\}\right)f=0.
\]

In the first case, for any $n\in \Z_+$,
\begin{multline*}
\|A^nf\|^2
\hfill
\text{by \eqref{n2}};
\\
\shoveleft{
=\int\limits_{\sigma(A)}|\lambda|^{2n}\,d(E_A(\lambda)f,f)
\ge 
\int\limits_{\{\lambda\in\sigma(A)\,|\,|\lambda|>1\}}|\lambda|^{2n}\,d(E_A(\lambda)f,f)
}\\
\shoveleft{
\ge 
\int\limits_{\{\lambda\in\sigma(A)\,|\,|\lambda|>1\}}1\,d(E_A(\lambda)f,f)
= \left(E_A(\{\lambda\in\sigma(A)\,|\,|\lambda|>1\})f,f\right)
}\\
\ \ \
=\left\|E_A(\{\lambda\in\sigma(A)\,|\,|\lambda|>1\})f\right\|^2>0,
\hfill
\end{multline*}
which implies that the orbit $\orb(f,A)$ of $f$ under $A$ cannot approximate the zero vector, and hence, is not dense in $X$.

In the second case, since
\[
f=E_A\left(\left\{\lambda\in \sigma(A)\,\middle|\, |\lambda|>1\right\}\right)f
+
E_A\left(\left\{\lambda\in \sigma(A)\,\middle|\, |\lambda|\le 1\right\}\right)f,
\]
we infer that
\[
f=E_A\left(\left\{\lambda\in \sigma(A)\,\middle|\, |\lambda|\le 1\right\}\right)f\neq 0
\]
and hence, for any $n\in \Z_+$,
\begin{multline*}
\left\|A^nf\right\|^2
=\left\|A^nE_A\left(\left\{\lambda\in \sigma(A)\,\middle|\, |\lambda|\le 1\right\}\right)f\right\|^2
\\
\hfill
\text{by \eqref{n2} and the properties of the \textit{operational calculus}};
\\
\shoveleft{
=
\int\limits_{\{\lambda\in\sigma(A)\,|\,|\lambda|\le 1\}}|\lambda|^{2n}\,d(E_A(\lambda)f,f)
\le
\int\limits_{\{\lambda\in\sigma(A)\,|\,|\lambda|\le 1\}}1\,d(E_A(\lambda)f,f)
}\\
\ \ \
= \left(E_A(\{\lambda\in\sigma(A)\,|\,|\lambda|\le 1\})f,f\right)
=\left\|E_A(\{\lambda\in\sigma(A)\,|\,|\lambda|\le 1\})f\right\|^2=\|f\|^2,
\hfill
\end{multline*}
which also implies that the orbit
$\orb(f,A)$ of $f$ under $A$, being bounded, is not dense in $X$ and completes the proof for the operator case.

\smallskip
Now, let us consider the case of the exponential collection $\left\{e^{tA}\right\}_{t\ge 0}$ assuming that $\displaystyle f \in \bigcap_{t\ge 0}D(e^{tA})\setminus \{0\}$ is arbitrary.

There are two possibilities: either
\[
E_A\left(\left\{\lambda\in \sigma(A)\,\middle|\, \Rep\lambda>0\right\}\right)f\neq 0
\]
or
\[
E_A\left(\left\{\lambda\in \sigma(A)\,\middle|\, \Rep\lambda>0\right\}\right)f=0.
\]

In the first case, for any $t\ge 0$,
\begin{multline*}
\|e^{tA}f\|^2
\hfill
\text{by \eqref{n2}};
\\
\shoveleft{
=\int\limits_{\sigma(A)}\left|e^{t\lambda}\right|^2\,d(E_A(\lambda)f,f)
=\int\limits_{\sigma(A)}e^{2t\Rep\lambda}\,d(E_A(\lambda)f,f)
}\\
\shoveleft{
\ge 
\int\limits_{\{\lambda\in\sigma(A)\,|\,\Rep\lambda>0\}}e^{2t\Rep\lambda}\,d(E_A(\lambda)f,f)
\ge 
\int\limits_{\{\lambda\in\sigma(A)\,|\,\Rep\lambda>0\}}1\,d(E_A(\lambda)f,f)
}\\
\ \ \
= \left(E_A(\{\lambda\in\sigma(A)\,|\,\Rep\lambda>0\})f,f\right)
=\left\|E_A(\{\lambda\in\sigma(A)\,|\,\Rep\lambda>0\})f\right\|^2>0,
\hfill
\end{multline*}
which implies that the orbit $\left\{e^{tA}f\right\}_{t\ge 0}$ of $f$ cannot approximate the zero vector, and hence, is not dense in $X$.

In the second case, since
\[
f=E_A\left(\left\{\lambda\in \sigma(A)\,\middle|\, \Rep\lambda>0\right\}\right)f
+
E_A\left(\left\{\lambda\in \sigma(A)\,\middle|\, \Rep\lambda\le 0\right\}\right)f,
\]
we infer that
\[
f=E_A\left(\left\{\lambda\in \sigma(A)\,\middle|\, \Rep\lambda\le 0\right\}\right)f\neq 0
\]
and hence, for any $t\ge 0$,
\begin{multline*}
\left\|e^{tA}f\right\|^2
=\left\|e^{tA}E_A\left(\left\{\lambda\in \sigma(A)\,\middle|\, \Rep\lambda\le 0\right\}\right)f\right\|^2
\\
\hfill
\text{by \eqref{n2} and the properties of the \textit{operational calculus}};
\\
\shoveleft{
=
\int\limits_{\{\lambda\in\sigma(A)\,|\,\Rep\lambda\le 0\}}\left|e^{t\lambda}\right|^2\,d(E_A(\lambda)f,f)
=
\int\limits_{\{\lambda\in\sigma(A)\,|\,\Rep\lambda\le 0\}}e^{2t\Rep\lambda}\,d(E_A(\lambda)f,f)
}\\
\shoveleft{
\le \int\limits_{\{\lambda\in\sigma(A)\,|\,\Rep\lambda\le 0\}}1\,d(E_A(\lambda)f,f)
= \left(E_A(\{\lambda\in\sigma(A)\,|\,\Rep\lambda\le 0\})f,f\right)
}\\
\ \ \
=\left\|E_A(\{\lambda\in\sigma(A)\,|\,\Rep\lambda\le 0\})f\right\|^2=\|f\|^2,
\hfill
\end{multline*}
which also implies that the orbit
$\left\{e^{tA}f\right\}_{t\ge 0}$ of $f$, being bounded, is not dense on $X$ and completes the proof of the exponential case and the entire statement.
\end{proof}

%%%%%%%%%%%%%%%%%%%%%%%%%%%%%%%%%%%%%%%%%%%%%%%%%%%%%%%
\section[Symmetric Operators]{Symmetric Operators}

The following generalizes in part \cite[Lemma $2.53$ (a)]{Grosse-Erdmann-Manguillot} to the case of a densely defined unbounded linear operator in a Hilbert space.

\begin{lem}\label{Lem1}
Let $A$ be a hypercyclic linear operator in a nonzero Hilbert space $(X,(\cdot,\cdot),\|\cdot\|)$ over the scalar field $\F$ of real or complex numbers  (i.e., $\F=\R$ or $\F=\C$). Then
\begin{enumerate}
\item the adjoint operator $A^*$ has no eigenvalues, or equivalently, for any $\lambda \in \F$, the range of the operator $A-\lambda I$ is dense in $X$, i.e.,
\[
\bar{R(A-\lambda I)}=X
\]
($R(\cdot)$ is the range of an operator);
%%%%%%
\item provided the space $X$ is complex (i.e., $\F=\C$) and the operator $A$ is closed,
the residual spectrum of $A$ is empty, i.e.,
\[
\sigma_r(A)=\emps.
\]
\end{enumerate}
\end{lem}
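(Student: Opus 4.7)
The plan is to handle part (1) via the classical ``eigenvalue-of-the-adjoint'' obstruction to hypercyclicity, and then to read off part (2) as an immediate corollary.

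For the equivalence in (1), I will first note that, by Remarks~\ref{remshlo}, any hypercyclic operator $A$ is densely defined, so $A^{*}$ is well defined and the standard Hilbert-space identity
\[
\bar{R(A-\lambda I)}^{\perp}=\ker\left((A-\lambda I)^{*}\right)=\ker(A^{*}-\bar\lambda I)
\]
holds for every $\lambda\in\F$. Since $\lambda\mapsto\bar\lambda$ is a bijection on $\F$, the condition that $R(A-\lambda I)$ be dense for \emph{every} $\lambda\in\F$ is equivalent to $A^{*}$ having no eigenvalues at all; it therefore suffices to rule out eigenvalues of $A^{*}$.

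I would then argue by contradiction. Suppose $A^{*}g=\mu g$ for some $\mu\in\F$ and some nonzero $g\in D(A^{*})$, and let $f$ be a hypercyclic vector for $A$; by Remarks~\ref{remshlo}, $f\in C^{\infty}(A)$, so every $A^{k}f$ lies in $D(A)$. A routine induction on $n$, using the defining relation of $A^{*}$ at each step, gives
\[
(A^{n}f,g)=\bar\mu^{\,n}(f,g),\qquad n\in\Z_{+}.
\]
Because $g\neq 0$, the continuous linear functional $v\mapsto(v,g)$ is a surjective open map onto $\F$, and hence carries the dense orbit $\orb(f,A)$ to a dense subset of $\F$. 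But the sequence $\{\bar\mu^{\,n}(f,g)\}_{n\in\Z_{+}}$ is plainly never dense in $\F$: depending on $\mu$ and $(f,g)$, it is either $\{0\}$, a two-point set, confined to a single circle about the origin, or has modulus tending to $0$ or to $\infty$. This contradiction establishes (1).

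Part (2) then drops out at once: with $A$ closed and $X$ complex, by definition $\sigma_{r}(A)$ consists of those $\lambda\in\C$ for which $A-\lambda I$ is injective and $R(A-\lambda I)$ is not dense, and (1) rules the latter condition out, giving $\sigma_{r}(A)=\emps$. The step I expect to require the most care is the inductive identity $(A^{n}f,g)=\bar\mu^{\,n}(f,g)$ in the unbounded setting, where at each stage one must invoke $f\in C^{\infty}(A)$ to keep the powers of $A$ on $f$ legitimate and use $g\in D(A^{*})$ to pass $A$ across the inner product; everything else is essentially bookkeeping.
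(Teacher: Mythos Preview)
Your proof is correct and follows essentially the same approach as the paper: the key step is the inductive identity $(A^{n}f,g)=\bar\mu^{\,n}(f,g)$ obtained from an eigenvector $g$ of $A^{*}$, combined with the observation that the continuous functional $(\cdot,g)$ sends a dense orbit to a dense set of scalars, while a geometric sequence in $\F$ cannot be dense. The only cosmetic difference is ordering---you invoke the orthogonal-complement identity $\overline{R(A-\lambda I)}^{\perp}=\ker(A^{*}-\bar\lambda I)$ at the outset to establish the equivalence, whereas the paper appeals to the corresponding orthogonal sum decomposition after proving $A^{*}$ has no eigenvalues.
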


\begin{proof}\
\begin{enumerate}
\item Let $f\in X$ be a hypercyclic vector for $A$. 

We proceed \textit{by contradiction}, assuming that
the adjoint operator $A^*$, which exists since $A$ is \textit{densely defined} (see Remarks \ref{remshlo}), has an eigenvalue $\lambda\in \F$, and hence,
\[
\exists\, g\in X\setminus \{0\}:\ A^*g=\lambda g,
\]
which, in particular, implies that $g\in C^\infty(A^*):=\bigcap_{n=0}^{\infty}D\left({(A^*)}^n\right)$ and
\[
\forall\, n\in \N:\ {(A^*)}^ng=\lambda^n g.
\]

In view of the above, we have inductively:
\[
\forall\, n\in \N:\  (A^nf,g)=(A^{n-1}f,A^*g)=(f,{(A^*)}^ng)=(f,\lambda^n g)
=\bar{\lambda}^n(f,g),
\]
the conjugation being superfluous when the space is real.

Since $g\neq 0$, by the \textit{Riesz representation theorem} (see, e.g., \cite{MarkinEFA,MarkinEOT}), the hypercyclicity of $f$ implies that the set
\[
\left\{(A^nf,g)\right\}_{n\in \N}
\]
is \textit{dense} in $\F$, which contradicts the fact that the same set
\[
\left\{{\bar{\lambda}}^n(f,g)\right\}_{n\in \N}
\]
is clearly not. 

Thus, the adjoint operator $A^*$ has no eigenvalues.

The rest of the statement of part (1) immediately follows from the orthogonal sum decomposition
\[
X=\ker(A^*-\bar{\lambda}I)\oplus \bar{R(A-\lambda I)},\ \lambda\in \F,
\]
the conjugation being superfluous when the space is real,
(see, e.g., \cite{MarkinEOT}).
%%%%%%
\item Suppose that the space $X$ is complex (i.e., $\F=\C$) and the operator $A$ is closed. Recalling that
\begin{equation*}
\sigma_r(A)=\left\{\lambda\in \C \,\middle|\,A-\lambda I\ \text{is \textit{one-to-one} and $\overline{R(A-\lambda I)}\neq X$} \right\}
\end{equation*}
(see, e.g., \cite{Markin2017,MarkinEOT}), we infer from part (1) that
\[
\sigma_r(A)=\emps.
\]
\end{enumerate}
\end{proof}

We immediately arrive at the following 

\begin{prop}[Non-Hypercyclicity Test]\label{NHCT}\ \\
Any densely defined closed linear operator $A$ in a nonzero complex Hilbert space $X$ with a nonempty residual spectrum (i.e., $\sigma_r(A)\neq \emps$)
is not hypercyclic.
\end{prop}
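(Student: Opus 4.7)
The plan is to prove the Proposition as a direct contrapositive of Lemma 4.1 part (2). Since that lemma already does all the substantive work, the role of this proposition is essentially to repackage its conclusion as a non-hypercyclicity criterion.

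Concretely, I would argue as follows. Suppose, for contradiction, that $A$ is a densely defined closed linear operator in a nonzero complex Hilbert space $X$ with $\sigma_r(A)\neq\emps$, and that $A$ is hypercyclic. Part (2) of Lemma 4.1 applies verbatim: its hypotheses require $X$ complex and $A$ closed (both assumed here), and its invocation of part (1) requires the existence of the adjoint $A^*$, which is guaranteed because $A$ is densely defined (this also follows automatically from hypercyclicity via Remarks 1.1). The conclusion of Lemma 4.1 part (2) is precisely that $\sigma_r(A)=\emps$, contradicting the standing assumption $\sigma_r(A)\neq\emps$.

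Thus $A$ cannot be hypercyclic, which is the desired conclusion. There is no genuine obstacle here — the entire content of the argument was already discharged in the proof of Lemma 4.1, which in turn rested on the eigenvalue/density dichotomy $X=\ker(A^*-\bar\lambda I)\oplus\overline{R(A-\lambda I)}$ together with the observation that for a hypercyclic vector $f$ and any eigenvector $g$ of $A^*$ with eigenvalue $\lambda$, the scalar orbit $\{(A^nf,g)\}_{n\in\N}=\{\bar\lambda^n(f,g)\}_{n\in\N}$ fails to be dense in $\C$. The proposition is therefore a one-line corollary, and the proof should simply cite Lemma 4.1(2) by contraposition rather than reproduce any of its machinery.
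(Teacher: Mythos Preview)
Your proposal is correct and matches the paper's approach exactly: the paper presents this proposition as an immediate consequence of Lemma~\ref{Lem1}(2) (``We immediately arrive at the following'') without a separate proof, and your contrapositive argument is precisely that implication.
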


Now, we are ready to prove the subsequent

\begin{thm}\label{Thm2} 
An arbitrary symmetric operator $A$ in a complex Hilbert space $X$ is not hypercyclic.
\end{thm}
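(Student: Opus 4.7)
The plan is to reduce to the closure $\bar A$ of $A$ and then dispatch the problem via either Theorem~\ref{Thm1} or Lemma~\ref{Lem1}(1), depending on whether $\bar A$ is self-adjoint.

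First, by Remarks~\ref{remshlo} any hypercyclic operator is densely defined, so I would immediately assume $A$ is densely defined; otherwise the conclusion is automatic. Then $A$ is closable, its closure $\bar A = A^{**}$ is closed, densely defined, and symmetric, and $A \subseteq \bar A \subseteq A^*$. A hypercyclic vector $f \in C^\infty(A)$ for $A$ automatically satisfies $f \in C^\infty(\bar A)$ with $\bar A^n f = A^n f$ for every $n \in \Z_+$, so hypercyclicity of $A$ would transfer to $\bar A$. Hence it suffices to prove that $\bar A$ is not hypercyclic.

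Next I would split on whether $\bar A$ is self-adjoint. If $\bar A = A^*$, then $\bar A$ is self-adjoint, hence normal, and Theorem~\ref{Thm1} applies verbatim. Otherwise $\bar A \subsetneq A^*$; by the standard theory of symmetric operators in a complex Hilbert space, at least one of the deficiency indices $n_\pm := \dim \ker(A^* \mp iI)$ is strictly positive, so $A^* = (\bar A)^*$ has $i$ or $-i$ as an eigenvalue. Lemma~\ref{Lem1}(1) applied to the densely defined $\bar A$ then contradicts the assumed hypercyclicity of $\bar A$.

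The principal obstacle is the clean dichotomy based on essential self-adjointness, specifically the appeal to the classical fact that a closed symmetric operator in a complex Hilbert space is self-adjoint precisely when both deficiency indices vanish. An equivalent route in the non-self-adjoint case would exploit the standard estimate $\|(\bar A - \lambda I)f\|^2 \ge |\Imp \lambda|^2 \|f\|^2$ for $\lambda \in \C \setminus \R$ to see that $\bar A - \lambda I$ is injective with closed range whose orthogonal complement $\ker(A^* - \bar\lambda I)$ is nontrivial on an entire open half-plane; this places a whole half-plane inside the residual spectrum $\sigma_r(\bar A)$ and allows Proposition~\ref{NHCT} to conclude directly, bypassing Lemma~\ref{Lem1} altogether.
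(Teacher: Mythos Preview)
Your argument is correct and follows essentially the same route as the paper: reduce to the closure, split on the deficiency indices, invoke Theorem~\ref{Thm1} in the self-adjoint case, and otherwise use that $A^*$ has a nonreal eigenvalue. The only cosmetic difference is that in the non-self-adjoint case the paper phrases the conclusion via $\sigma_r(\bar A)\neq\emptyset$ and Proposition~\ref{NHCT}, whereas you appeal to Lemma~\ref{Lem1}(1) directly---your ``alternative route'' at the end is exactly the paper's path.
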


\begin{proof}
Since
\[
A\subseteq A^*,
\]
without loss of generality, we can regard the symmetric operator $A$ to be \textit{closed} (see, e.g., \cite{Dun-SchI}).

If both \textit{deficiency indices} of the operator $A$ are equal to zero, $A$ is \textit{self-adjoint} ($A=A^*$) (see, e.g., \cite{Akh-Glaz}), and hence, by Theorem \ref{Thm1}, is not hypercyclic.

If at least one of the \textit{deficiency indices} of the operator $A$ is nonzero, then
\[
\sigma_r(A)\neq \emps
\]
(see, e.g., \cite{Akh-Glaz,MarkinEFA}), and hence, by Proposition \ref{NHCT}, $A$ is not hypercyclic.
\end{proof}

%%%%%%%%%%%%%%%%%%%%%%%%%%%%%%%%%%%%%%%%%%%%%%%%%%%%%%%
\section{Some Examples}

\begin{exmps}\label{exmpscard}\
\begin{enumerate}[label={\arabic*.}]
\item In the complex Hilbert space $L_2({\mathbb R})$, the self-adjoint differential operator $A:=i\dfrac{d}{dx}$ ($i$ is the \textit{imaginary unit}) with the domain
\[
D(A):=W_2^1({\mathbb R}):=\left\{f\in L_2(\R)\middle|f(\cdot)\in AC(\R),\ f'\in L_2(\R) \right\}
\] 
($AC(\cdot)$ is the set of \textit{absolutely continuous functions} on an interval) is non-hypercyclic by Theorem \ref{Thm1} (cf. \cite[Corollary $5.1$]{Markin2018(7)}).
%%%%%%%%
\item In the complex Hilbert space $L_2(0,\infty)$, the symmetric differential operator $A:=i\dfrac{d}{dx}$ with the domain
\[
D(A):=\left\{f\in L_2(0,\infty)\middle|f(\cdot)\in AC[0,\infty),\ f'\in L_2(0,\infty),\ f(0)=0 \right\}
\] 
and deficiency indices $(0,1)$ is non-hypercyclic by Theorem \ref{Thm2}.
%%%%%%%%
\item In the complex Hilbert space $L_2(0,2\pi)$, the symmetric differential operator $A:=i\dfrac{d}{dx}$ with the domain
\[
\quad\qquad
D(A):=\left\{f\in L_2(0,2\pi)\middle|f(\cdot)\in AC[0,2\pi],\ f'\in L_2(0,2\pi),\ f(0)=f(2\pi)=0 \right\}
\] 
and deficiency indices $(1,1)$ is non-hypercyclic by Theorem \ref{Thm2}.
\end{enumerate}

Cf. \cite[Sections $49$ and $80$]{Akh-Glaz}. 
\end{exmps}

%The exponentials given by \eqref{expf1} describing all \textit{weak/mild solutions} of evolution equation \eqref{+} with a normal operator $A$ in a complex Hilbert space (see Remarks \ref{remsws}), Theorem \ref{Thm1}, in particular, implies that the latter is void of chaos (see, e.g., \cite{Grosse-Erdmann-Manguillot}).

%%%%%%%%%%%%%%%%Bibliography%%%%%%%%%%%%%%%%%%%%%%%%%%%
 
%%%%%%%%%%%%%%%%%%%%%%%%%%%%%%%%%%%%%%%%%%%%%%%%%%%%%%%%%%%%%

\begin{thebibliography}{99}
%%%%%%%%%%%%%%%%%%%%%%%%%%%%%%%%%%%%%%%%%%%%%%%%%%%%%
\bibitem{Akh-Glaz}
{N.I. Akhiezer and I.M. Glazman},	%authors
\textit{Theory of Linear Operators in Hilbert Space}, %book
%%{2nd ed.},	%bookinfo
{Dover Publications, Inc.},	%publ
{New York},		%publaddr
{1993}.		%year
%%%%%%%%%%%%%%%%%%%%%%%%%%%%%%%%%%%%%%%%%%%%%%%%%%%%%%%
\bibitem{Ball}
{J.M. Ball},	%authors
\textit{Strongly continuous semigroups, weak solutions, and the variation of constants formula},	%paper
{Proc. Amer. Math. Soc.}	%journal
\textbf{63}	%vol
{(1977)},	%year
{no.~2},	%issue
{370--373}.	%pages
%%%%%%%%%%%%%%%%%%%%%%%%%%%%%%%%%%%%%%%%%%
\bibitem{B-Ch-S2001}
{J. B\`es, K.C. Chan, and S.M. Seubert},	%authors
\textit{Chaotic unbounded differentiation operators},	%paper
{Integral Equations and Operator Theory}	%journal
\textbf{40}	%vol
{(2001)},	%year
{no.~3},	%issue
{257--267}.	%pages
%%%%%%%%%%%%%%%%%%%%%%%%%%%%%%%%%%%%%%%%%%
\bibitem{deL-E-G-E2003}
{R. deLaubenfels, H. Emamirad, and K.-G. Grosse-Erdmann},	%authors
\textit{Chaos for 
semigroups of unbounded operators},	%paper
{Math. Nachr.}	%journal
\textbf{261/262}	%vol
{(2003)},	%year
%{no.~3},	%issue
{47--59}.	%pages
%%%%%%%%%%%%%%%%%%%%%%%%%%%%%%%%%%%%%%%%%%%%%%%%%%%%%%%%%%%%%
\bibitem{Dun-SchI}
{N. Dunford and J.T. Schwartz with the assistance of W.G. Bade and R.G. Bartle},	%authors
\textit{Linear Operators. Part I: General Theory},	%book
%{Pure and Applied Mathematics, vol. 7},	%bookinfo
{Interscience Publishers},	%publ
{New York},		%publaddr
{1958}.		%year
%%%%%%%%%%%%%%%%%%%%%%%%%%%%%%%%%%%%%%%%%%%%%%%%%%%%%%%%%
\bibitem{Dun-SchII}
{\bysame},	%authors
\textit{Linear Operators. Part II: Spectral Theory. Self Adjoint Operators in Hilbert Space}, %book
%{Ibid.},	%bookinfo
{Interscience Publishers},	%publ
{New York},		%publaddr
{1963}.		%year
%%%%%%%%%%%%%%%%%%%%%%%%%%%%%%%%%%%%%%%%%%%%%%%%%%%%%%%%%%%%%
\bibitem{Dun-SchIII}
{\bysame},	%authors
\textit{Linear Operators. Part III: Spectral Operators}, %book
%{Ibid.},	%bookinfo
{Interscience Publishers},	%publ
{New York},		%publaddr
{1971}.		%year
%%%%%%%%%%%%%%%%%%%%%%%%%%%%%%%%%%%%%%%%%%%%%%%%%%%%%%%%%%%%%
\bibitem{Engel-Nagel}
{K.-J. Engel and R. Nagel},	%authors
\textit{One-Parameter Semigroups
for Linear Evolution Equations}, %book
{Graduate Texts in Mathematics, vol. 194},	%bookinfo
{Springer-Verlag},	%publ
{New York},		%publaddr
{2000}.		%year
%%%%%%%%%%%%%%%%%%%%%%%%%%%%%%%%%%%%%%%%%%%%%%%%%%%%%%%%%
\bibitem{Gor-Knyaz}
{V.I. Gorbachuk and A.V. Knyazyuk},	%authors
\textit{Boundary values of solutions of op\-erator-differential equations},	%paper
{Russ. Math. Surveys}	%journal
\textbf{44}	%vol
{(1989)},	%year
%{no.~4},	%issue
{67--111}.	%pages
%%%%%%%%%%%%%%%%%%%%%%%%%%%%%%%%%%%%%%%%%%%%%%%%%%%%%%%
\bibitem{Grosse-Erdmann-Manguillot}
{K.-G. Grosse-Erdmann and A.P. Manguillot},	%authors
\textit{Linear Chaos}, %book
{Universitext},	%bookinfo
{Springer-Verlag},	%publ
{London},		%publaddr
{2011}.		%year
%%%%%%%%%%%%%%%%%%%%%%%%%%%%%%%%%%%%%%%%%%%%%%%%%%%%%%
\bibitem{Guirao-Montesinos-Zizler}
{A.J. Guirao, V. Montesinos, and V. Zizler},	%authors
\textit{Open Problems in the Geometry and Analysis of Banach Spaces}, %book
%{},	%bookinfo
{Springer International Publishing},	%publ
{Switzerland},		%publaddr
{2016}.		%year
%%%%%%%%%%%%%%%%%%%%%%%%%%%%%%%%%%%%%%%%%%%%%%%%%%%%%%%%%%%%%%
\bibitem{Markin1999}
{M.V. Markin},	%authors
\textit{On the strong smoothness of weak solutions of an abstract evolution equation. I. Differentiability},	%paper
{Appl. Anal.}	%journal
\textbf{73}	%vol
{(1999)},	%year
{no.~3-4},	%issue
{573--606}.	%pages
%%%%%%%%%%%%%%%%%%%%%%%%%%%%%%%%%%%%%%%%%%%%%%%%%%%%%%%%%%%%%%
\bibitem{Markin2001(1)}
{\bysame},	%authors
\textit{On the strong smoothness of weak solutions of an abstract evolution equation. II. Gevrey ultra\-differentiability}, %paper
{Ibid.}	%journal
\textbf{78}	%vol
{(2001)},	%year
{no.~1-2},	%issue
{97--137}.	%pages
%%%%%%%%%%%%%%%%%%%%%%%%%%%%%%%%%%%%%%%%%%%%%%%%%%%%%%%%%%%%%%
\bibitem{Markin2001(2)}
{\bysame},	%authors
\textit{On the strong smoothness of weak solutions of an abstract evolution equation. III. Gevrey ultradifferentiability of orders less than one}, %paper
{Ibid.}	%journal
\textbf{78}	%vol
{(2001)},	%year
{no.~1-2},	%issue
{139--152}.	%pages
% % % % % % % % % % % % % % % % % % % % % % % % % %
\bibitem{Markin2015}
{\bysame},	%authors
\textit{On the Carleman ultradifferentiable vectors of a scalar type spectral operator},	%paper
{Methods Funct. Anal. Topology} %journal
\textbf{21}	%vol
{(2015)},	%year
{no.~4},	%issue
{361--369}.	%pages
%%%%%%%%%%%%%%%%%%%%%%%%%%%%%%%%%%%%%%%%%%%%%%%%%%%%%%%%%
\bibitem{Markin2017}
{\bysame},	%authors
\textit{On certain spectral features inherent to scalar type spectral operators},	%paper
{Methods Funct. Anal. Topology} %journal
\textbf{23}	%vol
{(2017)},	%year
{no.~1},	%issue
{60--65}.	%pages
%%%%%%%%%%%%%%%%%%%%%%%%%%%%%%%%%%%%%%%%%%%%%%%%%%%%%%%
\bibitem{MarkinEFA}
{\bysame},	%authors
\textit{Elementary Functional Analysis}, %book
{De Gruyter Graduate},	%bookinfo
{Walter de Gruyter GmbH},	%publ
{Berlin/Boston},		%publaddr
{2018}.%year
%%%%%%%%%%%%%%%%%%%%%%%%%%%%%%%%%%%%%%%%%%%%%%%%%%%%%%%
\bibitem{MarkinEOT}
{\bysame},	%authors
\textit{Elementary Operator Theory}, %book
{De Gruyter Graduate},	%bookinfo
{Walter de Gruyter GmbH},	%publ
{Berlin/Boston}		%publaddr
(to appear).%year
%%%%%%%%%%%%%%%%%%%%%%%%%%%%%%%%%%%%%%%%%%%%%%%%%%%%%
\bibitem{Markin2018(7)}
{\bysame},	%authors
\textit{On the non-hypercyclicity of scalar type spectral operators and collections of their exponentials},	%paper
%{Ibid.}	%journal
%\textbf{23}	%vol
%{(2017)},	%year
%{no.~3},	%issue
%{(to appear)}.	%pages
\href{https://arxiv.org/abs/1807.07423}{\textcolor{blue}{arXiv:1807.07423}}.
%%%%%%%%%%%%%%%%%%%%%%%%%%%%%%%%%%%%%%%%%%%%%%%%%%%%%%%
\bibitem{Markin2018(9)}
{\bysame},	%authors
\textit{On the chaoticity and spectral structure of Rolewicz-type unbounded operators},	%paper
%{Ibid.}	%journal
%\textbf{23}	%vol
%{(2017)},	%year
%{no.~3},	%issue
%{(to appear)}.	%pages
\href{https://arxiv.org/abs/1811.06640}{\textcolor{blue}{arXiv:1811.06640}}
%%%%%%%%%%%%%%%%%%%%%%%%%%%%%%%%%%%%%%%%%%%%
\bibitem{Markin2018(10)}
{\bysame},	%authors
\textit{On general construct of chaotic unbounded linear operators in Banach spaces with Schauder bases},	%paper
%{}\\ %journal
%\textbf{23}	%vol
%{(2017)},	%year
%{no.~1}.	%issue
%{169--183}.	%pages
\href{https://arxiv.org/abs/1812.02294}{\textcolor{blue}{arXiv:1812.02294}}.
%%%%%%%%%%%%%%%%%%%%%%%%%%%%%%%%%%%%%%%%%%%%%%%%%%%%%%%%%
\bibitem{Nelson}
{E. Nelson},	%authors
\textit{Analytic vectors},	%paper
{Ann. of Math. (2)}	%journal
\textbf{70}	%vol
{(1959)},	%year
{no.~3},	%issue
{572--615}.	%pages
%%%%%%%%%%%%%%%%%%%%%%%%%%%%%%%%%%%%%%%%%%%%%%%%%%%%%%%%%%%%%
\bibitem{Plesner}
{A.I. Plesner},	%authors
\textit{Spectral Theory of Linear Operators},	%book
%{},	%bookinfo
{Nauka},	%publ
{Moscow},		%publaddr
{1965}		%year
{(Russian)}.
%%%%%%%%%%%%%%%%%%%%%%%%%%%%%%%%%%%%%%%%%%%%%%%%%%%%%%%
\bibitem{Radyno1983(1)}
{Ya.V. Radyno},	%authors
\textit{The space of vectors of exponential type},	%paper
{Dokl. Akad. Nauk BSSR}	%journal
\textbf{27}	%vol
{(1983)},	%year
{no.~9},	%issue
{791–-793}	%pages
{(Russian with English summary)}.
%%%%%%%%%%%%%%%%%%%%%%%%%%%%%%%%%%%%%%%%%%%%%%%%%%%%%%%
\bibitem{Rolewicz1969}
{S. Rolewicz},	%authors
\textit{On orbits of elements},	%paper
{Studia Math.}	%journal
\textbf{32}	%vol
{(1969)},	%year
%{no.~9},	%issue
{17--22}.	%pages
%%%%%%%%%%%%%%%%%%%%%%%%%%%%%%%%%%%%%%%%%%%%%%%%%%%%%%%%%%%%%
\end{thebibliography}
\end{document}